\newcommand{\s}{\textbf{S}}
\newcommand{\Sb}{S_{\text{b}}}
\newcommand{\dd}{\text{d}}
\newcommand{\deuc}{L_{\textup{euc}}}
\newcommand{\drie}{L_{\textup{rie}}}
\def\eps{\varepsilon}
\def\E{\mathbb{E}}
\def\P{\mathbb{P}}
\def\R{\mathbb{R}}
\def\L{\mathcal{L}}
\newtheorem{theorem}{Theorem}
\newtheorem{prop}[theorem]{Proposition}
\theoremstyle{plain}
\theoremstyle{remark}
\begin{document}


\title[]{Universal formula for extreme first passage statistics of diffusion}


\author{Sean D. Lawley}
\email[]{lawley@math.utah.edu}
\affiliation{Department of Mathematics, University of Utah, Salt Lake City, UT 84112 USA}


\date{\today}

\begin{abstract}
The timescales of many physical, chemical, and biological processes are determined by first passage times (FPTs) of diffusion. The overwhelming majority of FPT research studies the time it takes a single diffusive searcher to find a target. However, the more relevant quantity in many systems is the time it takes the fastest searcher to find a target from a large group of searchers. This fastest FPT depends on extremely rare events and has a drastically faster timescale than the FPT of a given single searcher. In this work, we prove a simple explicit formula for every moment of the fastest FPT. The formula is remarkably universal, as it holds for $d$-dimensional diffusion processes (i) with general space-dependent diffusivities and force fields, (ii) on Riemannian manifolds, (iii) in the presence of reflecting obstacles, and (iv) with partially absorbing targets. Our results rigorously confirm, generalize, correct, and unify various conjectures and heuristics about the fastest FPT.
\end{abstract}

\pacs{}

\maketitle

\section{Introduction} 
Many events in physical, chemical, and biological systems are initiated when a diffusive searcher finds a target \cite{redner2001}. Investigations of such first passage times (FPTs) began with Helmholtz and Lord Rayleigh in the context of acoustics \cite{helmholtz1860, rayleigh1945} and continue with current research driven largely by biological and chemical physics \cite{benichou2008, Reingruber2009, benichou2010, holcman2014time, holcman2014, calandre2014, vaccario2015, grebenkov2016, newby2016, lindsay2017}. The overwhelming majority of these studies seek to answer the question: How long does it take a given single diffusive searcher to find a target?

However, several recent studies, reviews, and commentaries have declared a major paradigm shift in the study and application of FPTs 
\cite{basnayake2019, schuss2019, coombs2019, redner2019, sokolov2019, rusakov2019, martyushev2019, tamm2019, basnayake2019c, basnayake_extreme_2018, reynaud2015, basnayake2019fast, guerrier2018}. This work has shown that the relevant question in many systems is actually: Out of a large group of diffusive searchers, how long does it take the fastest searcher to find a target?

This paradigm shift has generated new questions, calls for further analysis, and interesting conjectures to explain the apparent redundancy in many systems \cite{schuss2019, coombs2019, redner2019, sokolov2019, rusakov2019, martyushev2019, tamm2019, basnayake2019c}. For example, this work has been invoked to explain why roughly $10^{8}$ sperm cells search for the oocyte in human fertilization, when only one sperm cell is required \cite{meerson2015, reynaud2015, redner2019}. In fact, the recently formulated ``redundancy principle'' posits that many seemingly redundant copies of an object (molecules, proteins, cells, etc.)\ are not a waste, but rather have the specific function of accelerating search processes \cite{schuss2019}.

To illustrate, consider $N\gg1$ independent and identically distributed (iid) diffusive searchers. Let $\tau_{1},\dots,\tau_{N}$ be their $N$ iid FPTs to find some target. While most studies have calculated statistics of a single FPT, $\tau_{1}$, the more relevant quantity in many systems is the time it takes the fastest searcher to find the target,
\begin{align}\label{t1n}
T_{1,N}
:=\min\{\tau_{1},\dots,\tau_{N}\}.
\end{align}
This fastest FPT, $T_{1,N}$, is called an \emph{extreme statistic} \cite{gumbel2012}, and it has a drastically faster timescale than $\tau_{1}$.

Despite the fact that the statistics of a single FPT are well understood in many scenarios, very little is known about the fastest FPT. Indeed, rigorous results have been generally limited to effectively one-dimensional domains, with mostly conjectures and heuristics for diffusion in higher dimensions \cite{weiss1983, yuste1996, yuste2000, yuste2001, redner2014, meerson2015, ro2017, basnayake2019}. 

In this work, we prove a general theorem that determines every moment of the fastest FPT as $N\to\infty$ based on the short time distribution of a single FPT. We then combine this theorem with large deviation theory to prove a formula for the moments of the fastest FPT that holds in many diverse scenarios. In particular, the formula holds for ${{d}}$-dimensional diffusion processes (i) with general space-dependent diffusivities and force (drift) fields, (ii) on a Riemannian manifold, (iii) in the presence of reflecting obstacles, and (iv) with partially absorbing targets.

To summarize, first extend the definition in \eqref{t1n} by defining the $k$th fastest FPT for $k\ge1$,
\begin{align}\label{tkn}
T_{k,N}
:=\min\big\{\{\tau_{1},\dots,\tau_{N}\}\backslash\cup_{j=1}^{k-1}\{T_{j,N}\}\big\}.
\end{align}
For any fixed $m\ge1$ and $k\ge1$, we prove that the $m$th moment of the $k$th fastest FPT satisfies
\begin{align}\label{sum}
\E[(T_{k,N})^{m}]
\sim\left(\frac{{{L}}^{2}}{4D\ln N}\right)^{m}\quad\text{as }N\to\infty,
\end{align}
where ``$f\sim g$'' means $f/g\to1$. In \eqref{sum}, $D$ is a diffusivity and ${{L}}$ is a certain geodesic distance (given below) between the searcher starting locations and the target that (i) avoids any obstacles, (ii) includes any spatial variation or anisotropy in diffusivity, and (iii) incorporates any geometry in the case of diffusion on a curved manifold. Further, the length ${{L}}$ is unaffected by forces on the diffusive searchers or a finite absorption rate at the target. The result in \eqref{sum} rigorously confirms, generalizes, corrects, and unifies various conjectures and heuristics about the fastest FPT.


\section{Main theorem}

Let $S(t):=\P(\tau_{1}>t)$ denote the survival probability of a single FPT. The survival probability of the fastest FPT is then
\begin{align*}
\P(T_{1,N}>t)
=\P(\min\{\tau_{1},\dots,\tau_{N}\}>t)
=(S(t))^{N},
\end{align*}
assuming $\tau_{1},\dots,\tau_{N}$ are iid. Now, the mean of any nonnegative random variable $Z\ge0$ is $\int_{0}^{\infty}\P(Z>z)\,\dd z$. Therefore, the mean fastest FPT is
\begin{align}\label{id}
\E[T_{1,N}]
=\int_{0}^{\infty}(S(t))^{N}\,\dd t.
\end{align}

Since $S(t)$ is a decreasing function of time, it is clear from \eqref{id} that the large $N$ asymptotics of $\E[T_{1,N}]$ are determined by the short time behavior of $S(t)$. The following theorem determines these asymptotics in terms of the short time behavior of $S(t)$ on a logarithmic scale. Throughout this work, ``$f\sim g$'' means $f/g\to1$.

\begin{theorem}\label{theorem lg}
Let $\{\tau_{n}\}_{n=1}^{\infty}$ be a sequence of iid nonnegative random variables with survival probability $S(t):=\P(\tau_{1}>t)$. Assume that
\begin{align}\label{conditiona}
\int_{0}^{\infty}(S(t))^{N}\,\dd t<\infty\quad\text{for some $N\ge1$}, 
\end{align}
and assume that there exists a constant $C>0$ so that
\begin{align}\label{conditionb}
\lim_{t\to0+}t\ln(1-S(t))=-C<0.
\end{align}
Then for any $m\ge1$ and $k\ge1$, the $m$th moment of the $k$th fastest time $T_{k,N}$ in \eqref{tkn} satisfies
\begin{align}\label{res}
\E[(T_{k,N})^{m}]
\sim\Big(\frac{C}{\ln N}\Big)^{m}\quad\text{as }N\to\infty.
\end{align}
\end{theorem}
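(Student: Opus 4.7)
My plan is to analyze the identity $\E[(T_{k,N})^{m}] = m\int_0^\infty t^{m-1}\P(T_{k,N}>t)\,\dd t$ via the rescaling $t = (C/\ln N)s$ suggested by the target asymptotic. Condition \eqref{conditionb} gives $1-S(t)\sim e^{-C/t}$ as $t\to 0^+$, so $1-S((Cs)/\ln N)\sim N^{-1/s}$ and therefore $S((Cs)/\ln N)^N\sim (1-N^{-1/s})^N\to \mathbf{1}_{\{s<1\}}$ pointwise in $s$. Combined with the binomial representation $\P(T_{k,N}>t) = \P(\mathrm{Bin}(N,1-S(t))<k)$ and $\binom{N}{j}N^{-j/s}\sim N^{j(1-1/s)}/j!$, the same pointwise limit holds for every fixed $k$, so $T_{k,N}\ln N/C$ converges in distribution to the constant $1$. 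The remaining task is to promote this to convergence of the $m$-th moment.

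I would handle $k=1$ first. For the lower bound, the pointwise argument gives $\P(T_{1,N}>\alpha/\ln N)\to 1$ for every $\alpha<C$, so $\E[(T_{1,N})^m]\ge (\alpha/\ln N)^m\P(T_{1,N}>\alpha/\ln N)$, and sending $\alpha\uparrow C$ yields the desired $\liminf$. For the upper bound, fix $\delta>0$ and split the integral into three pieces: on $[0,(C+\delta)/\ln N]$ the trivial bound $\P\le 1$ contributes at most $((C+\delta)/\ln N)^m$; on an intermediate piece $[(C+\delta)/\ln N, T]$ where condition \eqref{conditionb} is valid, the asymptotic $1-S(t)\ge e^{-(C+\delta/2)/t}$ forces $S(t)\le 1-N^{-\gamma}$ with $\gamma=(C+\delta/2)/(C+\delta)<1$, so $S(t)^N\le e^{-N^{1-\gamma}}$, a super-exponential decay that easily absorbs the polynomial factor $t^{m-1}$; on the tail $[T,\infty)$, monotonicity of $S$ and condition \eqref{conditiona} force $S(t)^{N_0}=o(1/t)$, so for any $N_1>mN_0$ the integral $\int_T^\infty t^{m-1}S(t)^{N_1}\,\dd t$ is finite, and writing $S(t)^N\le (1/2)^{N-N_1}S(t)^{N_1}$ on $\{t:S(t)\le 1/2\}$ makes the tail contribution exponentially small in $N$. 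Sending $\delta\downarrow 0$ then yields the matching $\limsup$.

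Extending to $k\ge 2$ is short: the lower bound is automatic from $T_{k,N}\ge T_{1,N}$, and for the upper bound one uses the uniform estimate $\P(T_{k,N}>t)\le k\binom{N}{k-1}S(t)^{N-k+1}$ (obtained by bounding $(1-S)^j\le 1$ and $S^{N-j}\le S^{N-k+1}$ in the binomial sum) and reruns the three-piece argument, with the extra polynomial factor $N^{k-1}$ absorbed by the super-exponential decay on the intermediate piece and by taking $N$ slightly larger on the tail. The main technical obstacle is that conditions \eqref{conditiona} and \eqref{conditionb} control $S$ only in complementary and weak ways -- one near zero, the other in an integrated sense at infinity -- so the choice of split points and the careful bookkeeping of how many factors of $S$ to spend on integrability versus decay must be done with care to ensure that the residual integrals are truly $o((\ln N)^{-m})$ rather than merely $O((\ln N)^{-m})$.
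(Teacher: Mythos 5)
Your proof is correct, and it takes a genuinely different route from the paper's. The paper proves the core asymptotic $\int_0^\infty (S(t^{1/m}))^N\,\dd t \sim (C/\ln N)^m$ (Proposition~\ref{theorem vg}) by sandwiching $S$ between the explicit survival probabilities $S_{l_\pm}$ of a one-dimensional escape-from-an-interval problem with $l_\pm=\sqrt{C\pm\eps}$, and then quoting the known large-$N$ asymptotics of $\int_0^\infty (S_{l})^N\,\dd t$ from the literature. You instead derive the asymptotics from scratch: the lower bound via the rescaling $t=(Cs)/\ln N$ and the resulting pointwise limit $S(\alpha/\ln N)^N\to 1$ for $\alpha<C$, and the upper bound by a three-piece split of the integral in which condition~\eqref{conditionb} gives super-exponential decay on the middle piece and condition~\eqref{conditiona} plus monotonicity of $S$ gives exponential decay on the tail. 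The reduction of $k\ge 2$ to $k=1$ via the binomial decomposition of $\P(T_{k,N}>t)$ is the same in both arguments. The trade-off is that the paper's comparison argument is shorter but outsources the crux to a reference, while yours is more self-contained and makes the dependence on \eqref{conditionb} transparent.

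Two small points you should tighten when writing this out. First, the phrase ``$1-S(t)\sim e^{-C/t}$'' overstates what \eqref{conditionb} gives — it only gives $1-S(t)=e^{-(C+o(1))/t}$, which permits, say, an extra factor $t^p$ — but your subsequent rescaling argument only uses the log-asymptotic, so this is a matter of wording rather than substance. Second, there is a tension between the split point $T$: it must be small enough that $t\ln(1-S(t))\ge -(C+\delta/2)$ on $(0,T]$, yet for your tail estimate you want $S(T)\le 1/2$, which may force $T$ to be large. This is easily patched by inserting a fourth (fixed, $N$-independent) piece $[T,T_1]$ where $T_1$ satisfies $S(T_1)\le 1/2$ (such a $T_1$ exists by monotonicity and \eqref{conditiona}); on this piece $S(t)^N\le S(T)^N$ and $S(T)<1$ by \eqref{conditionb}, so the contribution decays exponentially in $N$. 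With that fix the argument is complete.
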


We now sketch the proof of Theorem~\ref{theorem lg} for the case $m=k=1$. The assumption in \eqref{conditionb} means roughly that
\begin{align*}
S(t)\approx1-e^{-C/t}\quad\text{for }t\ll1.
\end{align*}
Now, for a one-dimensional, pure diffusion process with unit diffusivity starting at the origin, let $\tau(l)$ denote the first time the process escapes the interval $(-2l,2l)$. The survival probability $S_{l}(t):=\P(\tau(l)>t)$ satisfies 
\begin{align*}
S_{l}(t)\approx1-e^{-l^{2}/t}\quad\text{for }t\ll1.
\end{align*}
Therefore, taking $l_{\pm}=\sqrt{C\pm \eps}$ for small $\eps>0$ yields
\begin{align*}
S_{l_{-}}(t)
\le S(t)
\le S_{l_{+}}(t)\quad\text{for }t\ll1.
\end{align*}
Hence, for sufficiently large $N$ we have the bounds
\begin{align*}
\int_{0}^{\infty}(S_{l_{-}}(t))^{N}\,\dd t
\le\int_{0}^{\infty}(S(t))^{N}\,\dd t
\le\int_{0}^{\infty}(S_{l_{+}}(t))^{N}\,\dd t,
\end{align*}
since the large $N$ behavior of these integrals is determined by the short time behavior of their integrands (this uses \eqref{conditiona}, which ensures that $\E[T_{1,N}]<\infty$ is finite for large $N$). Furthermore, it is known that \cite{weiss1983,lawley2019gumbel}
\begin{align*}
\int_{0}^{\infty}(S_{l_{\pm}}(t))^{N}\,\dd t
\sim\frac{C\pm\eps}{\ln N}\quad\text{as }N\to\infty.
\end{align*}
Noting that $\eps$ is arbitrary completes the argument. The full proof is in Appendix~\ref{provethm1}.

\section{Applications of Theorem~\ref{theorem lg}}

In this section, we combine Theorem~\ref{theorem lg} with large deviation theory to (i) prove that \eqref{res} is remarkably universal and (ii) identify the constant $C$ in \eqref{res}.

Let $\{X(t)\}_{t\ge0}$ be a ${{d}}$-dimensional diffusion process on a manifold $M$ and let $p({{x}},t|{x_{0}},0)$ be the probability density that $X(t)={{x}}$ given 
$X(0)={x_{0}}$. That is,
\begin{align}\label{pd}
p({{x}},t|{x_{0}},0)\,\dd {{x}}
=\P(X(t)={{x}}\,|\,X(0)={x_{0}}).
\end{align}
Let $\tau>0$ be the FPT to some target set $U_{\text{T}}\subset M$,
\begin{align}\label{tau}
\tau
:=\inf\{t>0:X(t)\in U_{\text{T}}\},
\end{align}
and let $S(t):=\P(\tau>t)$ be the survival probability. Let $\{\tau_{n}\}_{n=1}^{\infty}$ be a sequence of iid realizations of $\tau$ and let $T_{k,N}$ be the $k$th order statistic in \eqref{tkn}. We assume the target $U_{\text{T}}$ is the closure of its interior, which precludes trivial cases such as the target being a single point (which would make $\tau=+\infty$ in dimension $d\ge2$). Assume the initial distribution of $X$ is a probability measure with compact support $U_{0}\subset M$ that does not intersect the target,
\begin{align}\label{away}
U_{0}\cap U_{\text{T}}=\varnothing.
\end{align}
For example, the initial distribution could be a Dirac mass at a point $X(0)=x_{0}=U_{0}\in M$ if $x_{0}\notin U_{\text{T}}$, or it could be uniform on $U_{0}$ if the closed set $U_{0}$ satisfies \eqref{away}.


\subsection{Pure diffusion in $\R^{{d}}$\label{section pure}} To setup more complicated applications of Theorem~\ref{theorem lg}, first consider the simple case of free diffusion in $M=\R^{{d}}$ with diffusivity $D>0$. Of course, the probability density \eqref{pd} is Gaussian,
\begin{align}\label{pured}
p({{x}},t|{x_{0}},0)
=\frac{1}{(4\pi D t)^{d/2}}\exp\left(\frac{-\deuc^{2}({x_{0}},{{x}})}{4Dt}\right),
\end{align}
where $\deuc({x_{0}},{{x}}):=\|{x_{0}}-{{x}}\|$ is the standard Euclidean length. A simple manipulation of \eqref{pured} reveals the following short time behavior of the probability density,
\begin{align}\label{vpure}
\lim_{t\to0+}t\ln p({{x}},t|{x_{0}},0)
=-\frac{\deuc^{2}({x_{0}},{{x}})}{4D}.
\end{align}
This behavior of the probability density implies that (see Appendix~\ref{shorttime})
\begin{align}\label{vpureS}
\lim_{t\to0+}t\ln(1-S(t))
=-\frac{\deuc^{2}(U_{0},U_{\text{T}})}{4D}<0,
\end{align}
where $\deuc(U_{0},U_{\text{T}})$ is the shortest distance from $U_{0}$ to the target,
\begin{align}\label{dset}
\deuc(U_{0},U_{\text{T}})
:=\inf_{x_{0}\in U_{0},{{x}}\in U_{\text{T}}}\deuc({x_{0}},{{x}})>0.
\end{align}
Note that \eqref{away} ensures \eqref{dset} is strictly positive.

Therefore, Theorem~\ref{theorem lg} implies \eqref{sum} for the Euclidean length ${{L}}=\deuc(U_{0},U_{\text{T}})$ if \eqref{conditiona} is satisfied. If the dimension is ${{d}}\in\{1,2\}$, then \eqref{conditiona} is satisfied for $N=3$. If ${{d}}\ge3$, then merely taking the complement of the target, $\R^{n}\backslash U_{\text{T}}$, to be bounded ensures \eqref{conditiona} is satisfied for $N=1$.

\begin{figure}[t]
\centering
\includegraphics[width=1\linewidth]{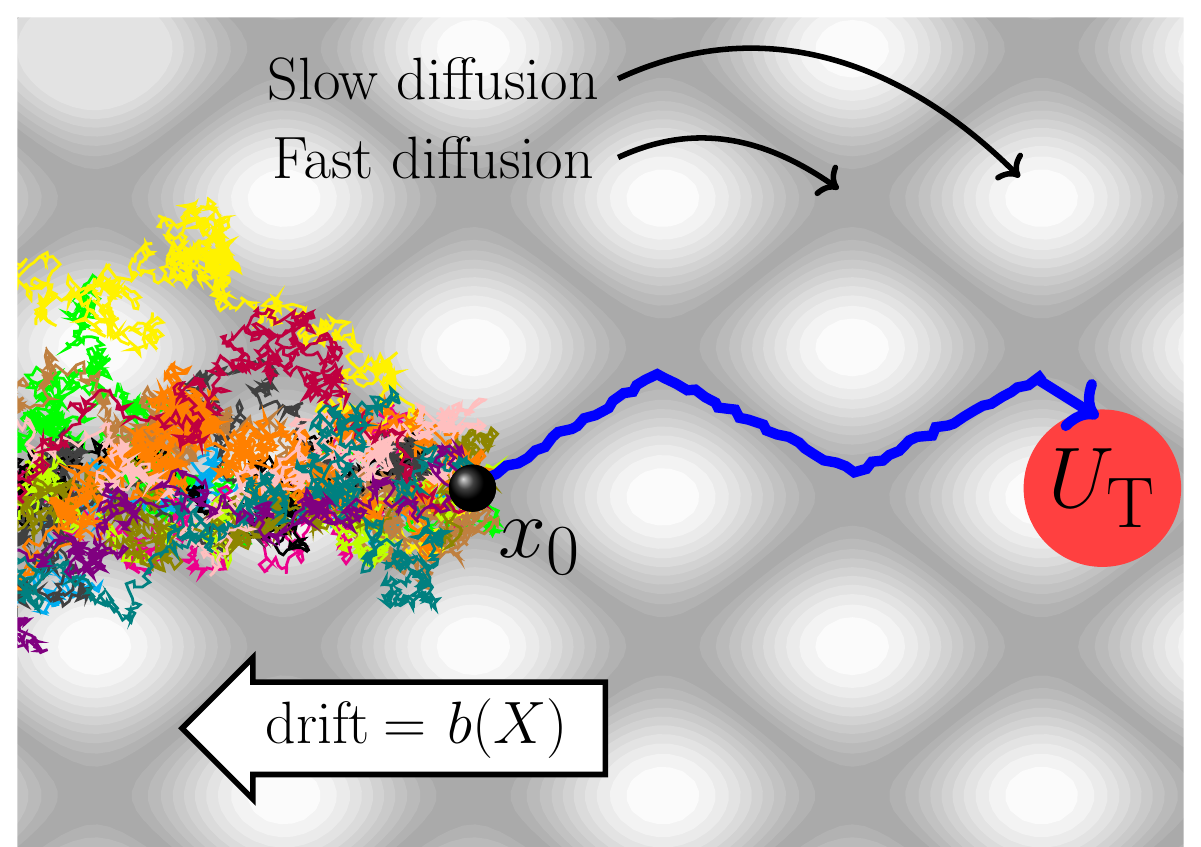}
\caption{\small Diffusion with space-dependent diffusivity and drift. For the diffusion process in \eqref{sde}, the thin trajectories are 15 typical paths which get pushed to the left by the drift. The thick blue curve shows that the fastest searcher is unaffected by the drift and moves almost deterministically to the target through regions of fast diffusion (grey regions) while avoiding regions of slow diffusion (white regions).}
\label{figgeo}
\end{figure}

\subsection{Diffusion with space-dependent diffusivity and drift}

Suppose the diffusion follows the It\^{o} stochastic differential equation on $M=\R^{{d}}$,
\begin{align}\label{sde}
\begin{split}
\dd X
&=b(X)\,\dd t+\sqrt{2D}\sigma(X)\,\dd W.
\end{split}
\end{align}
Here, $b:\R^{{d}}\to\R^{{d}}$ is the space-dependent drift vector describing any force on the searcher, $\sigma:\R^{{d}}\to\R^{{{d}}\times m}$ is a dimensionless function describing any space-dependence or anisotropy in the diffusion, $D>0$ is a characteristic diffusivity, and $W(t)\in\R^{m}$ is a standard Brownian motion. Assume $\R^{{d}}\backslash U_{\text{T}}$ is bounded to ensure \eqref{conditiona} is satisfied. Assume $b$ and $\sigma$ satisfy mild conditions (namely that $b$ is uniformly bounded and uniformly Holder continuous and that $\sigma\sigma^{T}$ is uniformly Holder continuous and its eigenvalues are bounded above $\alpha_{1}>0$ and bounded below $\alpha_{2}>\alpha_{1}$).

For any smooth parametric path $\omega:[0,1]\to M$, define the length of the path in the Riemannian metric given by the inverse of the diffusivity matrix $a:=\sigma\sigma^{T}$,
\begin{align}\label{ll}
l(\omega)
:=\int_{0}^{1}\sqrt{\dot{\omega}^{T}(s)a^{-1}(\omega(s))\dot{\omega}(s)}\,\dd s.
\end{align}
Then, the probability density \eqref{pd} satisfies \cite{varadhan1967}
\begin{align}\label{vv}
\lim_{t\to0+}t\ln p({{x}},t|{x_{0}},0)
=-\frac{\drie^{2}({x_{0}},{{x}})}{4D},
\end{align}
where $\drie$ is the geodesic length
\begin{align}\label{drie}
\begin{split}
\drie({x_{0}},{{x}})
&:=\inf\{l(\omega):\omega(0)=x_{0},\,\omega(1)=x\},
\end{split}
\end{align}
where the infimum is over smooth paths $\omega:[0,1]\to M$ which connect $\omega(0)={x_{0}}$ to $\omega(1)={{x}}$. Equation~\eqref{vv} is a celebrated result in large deviation theory known as Varadhan's formula \cite{varadhan1967,norris1997}, which generalizes the elementary formula in \eqref{vpure}. Intuitively, $\drie(x_{0},x)$ is the length of the optimal path from $x_{0}$ to $x$, where paths are penalized for passing through regions of slow diffusion, see Fig.~\ref{figgeo}. Notice that $\drie$ reduces to the Euclidean length $\deuc$ if $a$ is the identity matrix.

Varadhan's formula \eqref{vv} implies (see Appendix~\ref{shorttime})
\begin{align}\label{vvS}
\lim_{t\to0+}t\ln(1-S(t))
=-\frac{\drie^{2}({U_{0}},U_{\text{T}})}{4D}<0,
\end{align}
where $\drie({U_{0}},U_{\text{T}})$ is defined analogously to \eqref{dset},
\begin{align}\label{dset2}
\drie({U_{0}},U_{\text{T}})
:=\inf_{x_{0}\in U_{0},{{x}}\in U_{\text{T}}}\drie({x_{0}},{{x}})>0,
\end{align}
and is strictly positive by \eqref{away}. Therefore, Theorem~\ref{theorem lg} implies that the extreme FPT formula \eqref{sum} holds for the length ${{L}}=\drie(U_{0},U_{\text{T}})$.

Hence, the drift $b$ in \eqref{sde} has no effect on extreme FPTs. This counterintuitive result confirms a conjecture of Weiss, Shuler, and Lindenberg \cite{weiss1983}. Furthermore, \eqref{drie} reveals how extreme FPTs depend on heterogeneous diffusion. In particular, \eqref{drie} shows that the fastest searchers avoid regions of space in which the diffusivity is slow. These two points are illustrated in Fig.~\ref{figgeo}.

\begin{figure}[t]
\centering
\includegraphics[width=1\linewidth]{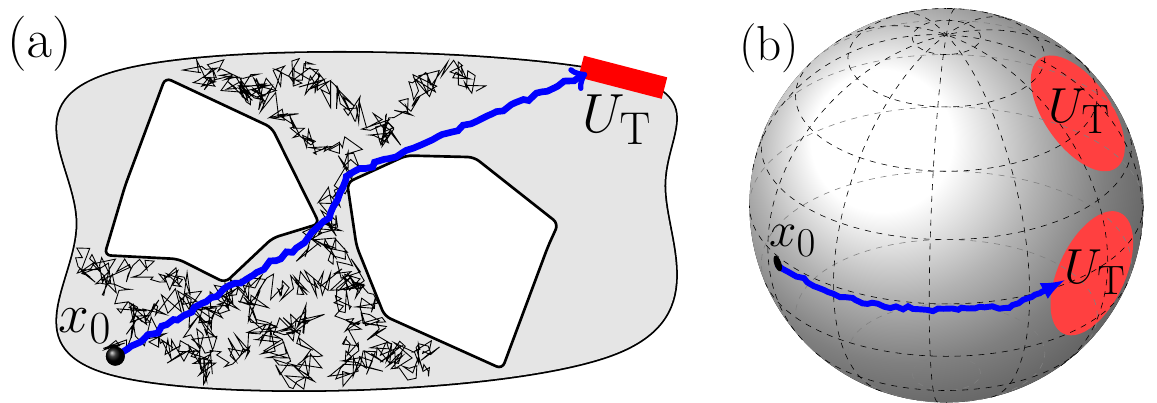}
\caption{\small Diffusion on a manifold with reflecting obstacles. (a) The thin black trajectory shows a typical diffusive path that wanders around before finding the target. The thick blue trajectory illustrates that the fastest searcher moves almost deterministically along the shortest path to the target while avoiding any obstacles. (b) The blue trajectory shows that the fastest searcher follows the shortest path to the target, which depends on the curvature of the manifold. If the target is multiple regions, the fastest searcher finds the closest one.}
\label{figobs}
\end{figure}

\subsection{Diffusion on a manifold with reflecting obstacles}

Let $M$ be a ${{d}}$-dimensional smooth Riemannian manifold. As two simple examples, $M$ could be a set in $\R^{{d}}$ with smooth outer and inner boundaries (obstacles) as in Fig.~\ref{figobs}a, or $M$ could be the surface of a 3-dimensional sphere as in Fig.~\ref{figobs}b. Consider a diffusion process on $M$ described by its generator $\L$, which in each coordinate chart is a second order differential operator of the form
\begin{align*}
\L f
=D\sum_{i,j=1}^{n}\frac{\partial}{\partial x_{i}}\Big(a_{ij}(x)\frac{\partial f}{\partial x_{j}}\Big),
\end{align*}
where $a=\{a_{ij}\}_{i,j=1}^{n}$ satisfies some mild conditions (assume that in each chart, $a$ is symmetric, continuous, and its eigenvalues are bounded above some $\alpha_{1}>0$ and bounded below some $\alpha_{2}>\alpha_{1}$). Assume the diffusion reflects from the boundary of $M$ (if $M$ has a boundary) and assume $M$ is connected and compact to ensure \eqref{conditiona} is satisfied.

In this setup, the probability density \eqref{pd} satisfies \cite{norris1997}
\begin{align}\label{vvnorris}
\lim_{t\to0+}t\ln p({{x}},t|{x_{0}},0)
=-\frac{\drie^{2}({x_{0}},{{x}})}{4D},
\end{align}
where the length is again given by \eqref{drie}, and thus (see Appendix~\ref{shorttime})
\begin{align}\label{vvnorrisS}
\lim_{t\to0+}t\ln(1-S(t))
=-\frac{\drie^{2}({U_{0}},U_{\text{T}})}{4D}<0,
\end{align}
which is strictly negative by \eqref{away}. Therefore, Theorem~\ref{theorem lg} implies that the extreme FPTs satisfy \eqref{sum}.

Fig.~\ref{figobs}a illustrates that the fastest searcher takes the shortest path to the target while avoiding any obstacles. Note that the infimum in \eqref{drie} is over smooth paths which lie in $M$, and thus paths which go through obstacles are excluded. Fig.~\ref{figobs}b illustrates that the fastest searcher takes the shortest path to the target, where the length depends on the curvature of the manifold. Fig.~\ref{figobs}b also illustrates that if the target $U_{\text{T}}$ consists of multiple regions, the fastest searcher finds the closest target.

\subsection{Partially absorbing targets}

Our analysis above, and all previous work on extreme FPTs, assumes that the target is perfectly absorbing. That is, it assumes that the searcher is absorbed as soon as it hits the target. However, a more general model assumes that the target is \emph{partially absorbing}. This means that when a searcher hits the target, it is either absorbed or reflected, and the probabilities of these events are described by a parameter $\kappa>0$ called the reactivity or absorption rate \cite{grebenkov2006}.

Consider a one-dimensional pure diffusion on the positive real line with a partially absorbing target at the origin with reactivity $\kappa$. Let $\tau$ be the first time the diffusion hits the target and $\tau_{\kappa}\ge\tau$ be when it is absorbed. If $X(0)=L>0$, then an exact calculation yields
\begin{align}\label{Skappa}
S_{\kappa}(t)
:=\P(\tau_{\kappa}>t)
=S(t)+e^{\frac{\kappa  (\kappa  t+L)}{D}} \text{erfc}\Big(\frac{2 \kappa  t+L}{\sqrt{4D t}}\Big),
\end{align}
where $S(t)=\P(\tau>t)=1-\text{erfc}(\frac{L}{\sqrt{4D t}})$. Using this formula, a straightforward calculation shows that
\begin{align*}
\lim_{t\to0+}t\ln(1-S_{\kappa}(t))
=\lim_{t\to0+}t\ln(1-S(t))
=-\frac{L^{2}}{4D}.
\end{align*}

Therefore, upon noting that \eqref{conditiona} is satisfied for $N=3$, we conclude that the extreme statistics satisfy \eqref{sum}. That is, if $T_{k,N,\kappa}$ is the $k$th fastest absorption time and $T_{k,N}$ is the $k$th fastest hitting time, then as $N\to\infty$,
\begin{align}\label{kappaconv}
\E[(T_{k,N,\kappa})^{m}]\sim
\E[(T_{k,N})^{m}]
\sim\left(\frac{{{L}}^{2}}{4D\ln N}\right)^{m}.
\end{align}
Hence, the extreme statistics for a partially absorbing target and a perfectly absorbing target are identical. 

\begin{figure}[t]
\centering
\includegraphics[width=.9\linewidth]{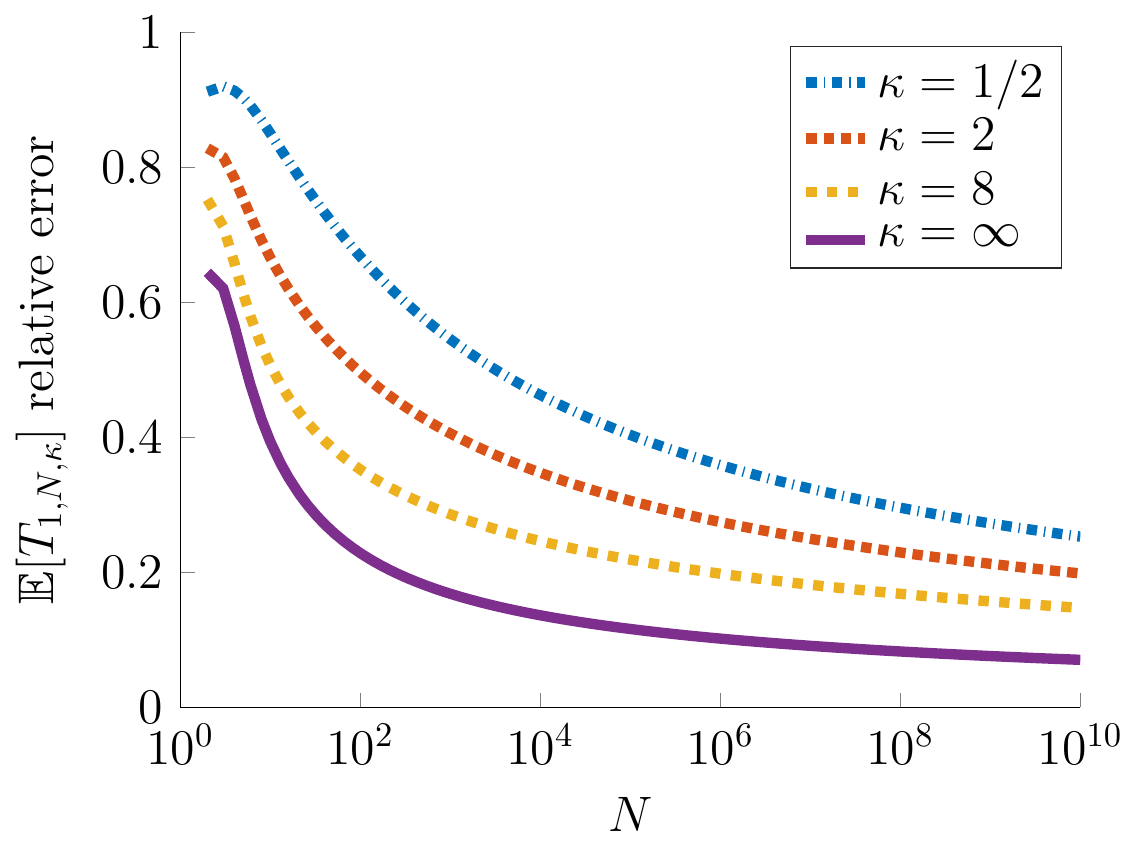}
\caption{\small Relative error \eqref{re} as a function of $N$ for different values of the reactivity $\kappa$. We take $L=D=1$.}
\label{figsim}
\end{figure}

Since \eqref{kappaconv} is statement about the large $N$ behavior of the extreme statistics, it is natural to ask about the convergence rate. Further, it is clear that for any finite $N$, and any moment $m>0$,
\begin{align*}
\E[(T_{k,N})^{m}]
<\E[(T_{k,N,\kappa_{+}})^{m}]
<\E[(T_{k,N,\kappa_{-}})^{m}],
\end{align*}
where $0<\kappa_{-}<\kappa_{+}<\infty$. It is thus also natural to ask how the convergence depends on the reactivity $\kappa$. Using the exact formula in \eqref{Skappa}, we can numerically evaluate the integral,
\begin{align}\label{quadrature}
\E[T_{1,N,\kappa}]=\int_{0}^{\infty}(S_{\kappa}(t))^{N}\,\dd t,
\end{align}
to yield a numerical approximation to $\E[T_{1,N,\kappa}]$. In Fig.~\ref{figsim}, we plot the relative error,
\begin{align}\label{re}
\left|\frac{\int_{0}^{\infty}(S_{\kappa}(t))^{N}\,\dd t-\frac{{{L}}^{2}}{4D\ln N}}{\int_{0}^{\infty}(S_{\kappa}(t))^{N}\,\dd t}\right|,
\end{align}
between our asymptotic formula in \eqref{kappaconv} and the quadrature in \eqref{quadrature} as a function of $N$. This figure illustrates that the convergence rate is slow, since the relative error is on the order of tens of percentage points for $N$ as large as $10^{10}$. Indeed, the slow convergence rate for formulas for extreme statistics of FPTs of diffusion is well known \cite{weiss1983}. This figure also shows that the error decreases as $\kappa$ increases ($\kappa=\infty$ corresponds to a perfectly absorbing target). This is to be expected, since for any fixed $N$, the FPT $T_{k,N,\kappa}$ diverges almost surely as $\kappa\to0+$, whereas the asymptotic formula in \eqref{kappaconv} is independent of $\kappa$. To see how the extreme FPT statistics depend on $\kappa$ at higher order for large $N$, see our recent work in Ref.~\cite{lawley2019gumbel}.


While the calculation which led to \eqref{kappaconv} was for a one-dimensional problem, this result that the leading order extreme statistics are independent of the reactivity $\kappa$ extends to much more general systems. To see why, observe that the absorption time, $\tau_{\kappa}$, is the sum of (i) the time, $\tau$, that it takes a searcher to first hit the target and (ii) the time, call it $\tau_{0}$, that it takes to be absorbed after starting on the target (this follows from the strong Markov property \cite{gardiner2009}). The fact that the extreme statistics are unaffected by a partially absorbing target ($\kappa<\infty$) versus a perfectly absorbing target ($\kappa=\infty$) is equivalent to $\tau_{0}\ll\tau$ for the fastest searchers. As we have seen, $\P(\tau>t)\approx1-\exp(-{{L}}^{2}/(4Dt))$ at short times, where $L>0$ depends on the domain. Further, the short time behavior of $\P(\tau_{0}>t)$ will not depend on the domain, since the problem is effectively one-dimensional at short times for a searcher starting on a partially absorbing target. Hence, the fact that $\tau_{0}\ll\tau$ for the fastest searchers in this one-dimensional problem implies that it also holds for more general systems. We make this argument rigorous in Appendix~\ref{partial}. Specifically, we prove that the extreme statistics for a partially absorbing target and a perfectly absorbing target are identical for pure diffusion in smooth bounded domains in $\R^{{d}}$ where the target is any finite disjoint union of hyperspheres.

\section{Discussion}
We have proven the formula in \eqref{sum} for the extreme FPT statistics of diffusive search, where $L=L(U_{0},U_{\text{T}})>0$ is given in \eqref{dset2} and is the geodesic distance between the possible initial searcher locations $U_{0}$ and the target $U_{\text{T}}$. This distance is the minimal length of a path that connects the initial searcher location to the target that (i) avoids any reflecting obstacles, (ii) incurs a cost for paths that go through regions of slow diffusion, and (iii) incorporates any curvature in the underlying space. Further, this distance is (iv) unaffected by a force (drift) field and (v) unaffected by a partially absorbing target.

The study of extreme FPTs of diffusion began in 1983 with Weiss, Shuler, and Lindenberg \cite{weiss1983}, where they derived $\E[T_{k,N}]\sim {{L}}^{2}/(4D\ln N)$ for one-dimensional domains with constant diffusivity and a certain class of force field. They conjectured that $\E[T_{k,N}]\sim  C/\ln N$ in higher-dimensions independent of the force field, but pointed out that they had ``nothing like a proof'' and that the constant $C$ may be ``quite difficult to calculate.'' Our results rigorously confirm their conjecture and determine $C$. Important analysis of extreme FPTs in effectively one-dimensional domains continued in \cite{yuste1996, yuste2000, yuste2001, van2003, redner2014, meerson2015}.

The recent interest in extreme FPTs of diffusion was sparked by the pioneering work in \cite{basnayake2019}, wherein the authors formally derived $\E[T_{1,N}]\sim {{L}}^{2}/(4D\ln N)$ for pure diffusion in 2-dimensional domains with small targets. Their work also found that $\E[T_{1,N}]$ decays like $1/\sqrt{\ln N}$ in 3-dimensional domains, which was later corrected for convex domains in \cite{lawley2019esp}. In fact, the correct 3-dimensional result for small targets was first derived in \cite{ro2017}.

The importance of extreme FPTs of diffusion in molecular and cellular biology was recently highlighted in the excellent review \cite{schuss2019}. This review prompted 7 subsequent commentaries \cite{coombs2019, redner2019, sokolov2019, rusakov2019, martyushev2019, tamm2019, basnayake2019c}, which each emphasized different aspects of how extreme statistics transform traditional notions of biological timescales. These commentaries also noted the need for further analysis of extreme FPTs.

The results in this work significantly extend the previous results on extreme FPTs. Indeed, most prior work considered only pure diffusion in either effectively one-dimensional domains or domains with small targets. In contrast, our results allow general space-dependent diffusivities and force fields with general targets, diffusion on manifolds with obstacles, and partially absorbing targets. In addition, our analysis yields every moment of the extreme FPTs, rather than only the mean. Indeed, \eqref{sum} implies that the variance vanishes faster than $(\ln N)^{-2}$,
\begin{align*}
\text{Variance}(T_{k,N})=o((\ln N)^{-2})\quad\text{as }N\to\infty.
\end{align*}

 In further contrast, prior analysis tended to rely on exact formulas for certain probabilities which are known only for simple domains or complicated formal asymptotics. The present work unites and extends this previous work with a simple and rigorous argument.

It is well known that intracellular \cite{blum1989} and extracellular \cite{sykova2008, nicholson2017} domains are very tortuous. This tortuosity is commonly modeled by heterogeneous diffusivity \cite{cherstvy2013}, reflecting obstacles \cite{blum1989}, and/or an effective force field that tends to exclude searchers from regions of dense obstacles \cite{isaacson2011}. Hence, this work has direct relevance to these models. Indeed, a number of influential works have found that tortuous and crowded geometries drastically affect FPTs of \emph{single} searchers \cite{benichou2010geometry, isaacson2011, woringer2014}. For example, Ref.~\cite{isaacson2011} used microscopic imaging of a nucleus to determine how volume exclusion by chromatin affects the time it takes a regulatory protein to find specific binding sites (the chromatin was modeled by an effective force field). Since we have proven that extreme FPTs are unaffected by force fields and depend only on the shortest path that avoids obstacles and regions of slow diffusivity, we predict that tortuous domains have a much weaker effect on processes initiated by the fastest searcher out of many searchers.

While the present work computes every moment of $T_{k,N}$ assuming merely that \eqref{conditionb} holds, one can obtain more information about the distribution of $T_{k,N}$ if we have more information about $S(t)$ at short time. Specifically, we have recently proven \cite{lawley2019gumbel} that if
\begin{align*}
1-S(t)
\sim At^{p}e^{-C/t}\quad\text{as }t\to0+,
\end{align*}
for some $A>0$, $p\in\R$, $C>0$, then a certain rescaling of $T_{k,N}$ converges in distribution to a type of Gumbel random variable. In addition to giving the distribution of $T_{k,N}$, the results in \cite{lawley2019gumbel} yield higher order terms in the moment formulas obtained in the present work.

Finally, a remarkable feature of extreme FPTs of diffusion is that the fastest searchers are almost \emph{deterministic}, as they tend to follow the shortest path to the target. This point has been argued heuristically, beginning in \cite{weiss1983} and continuing with recent work \cite{basnayake_extreme_2018}. 

The point that the fastest searchers move almost deterministically along the shortest path to the target is clear from our formula \eqref{sum} upon noting that the length ${{L}}$ in the formula is a ``local'' quantity that depends only on properties near this shortest path. That is, extreme FPTs are independent of perturbations outside any small region around this path (as long as these perturbations do not create a shorter path). Indeed, taking the diffusivity to be arbitrarily small away from this path does not affect extreme FPTs. Of course, this can only be true if the fastest searchers follow the shortest path.

While the asymptotically deterministic behavior of extreme first passage processes stems from the large number of searchers, this phenomenon is very different from the law of large numbers. In the law of large numbers, the deterministic behavior arises through averaging many random samples. In contrast, the deterministic behavior in extreme first passage theory occurs through rare events. This is a manifestation of the well known principle in large deviations that rare events occur in a predictable fashion; they are controlled by the least unlikely scenario.

\medskip
\begin{acknowledgments}
The author was supported by the National Science Foundation (Grant Nos.\ DMS-1814832 and DMS-1148230).
\end{acknowledgments}

\appendix

\section{Proof of Theorem~\ref{theorem lg}\label{provethm1}}

Before proving Theorem~\ref{theorem lg}, we first prove a slightly different result.

\begin{prop}\label{theorem vg}
Assume $S:[0,\infty)\to[0,1]$ is a nonincreasing function satisfying
\begin{enumerate}[(a)]
\item
$\int_{0}^{\infty}(S(t))^{N}\,\dd t<\infty$ for some $N\ge1$,
\item
there exists a constant $C>0$ so that
\begin{align*}
\lim_{t\to0+}t\ln(1-S(t))=-C<0.
\end{align*}
\end{enumerate}
Then for each $m\ge1$, we have that
\begin{align*}
\int_{0}^{\infty}(S(t^{1/m}))^{N}\,\dd t
\sim\Big(\frac{C}{\ln N}\Big)^{m}\quad\text{as }N\to\infty.
\end{align*}
\end{prop}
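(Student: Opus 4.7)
The plan is a direct sandwich argument driven by condition (b). For any $\eps>0$, condition~(b) produces a $\delta>0$ such that
\[
1-e^{-(C-\eps)/t}\;\le\;S(t)\;\le\;1-e^{-(C+\eps)/t}\quad\text{for }t\in(0,\delta),
\]
and raising to the $N$th power preserves these bounds on $[0,1]$. After the substitution $u=t^{1/m}$, the integral of interest becomes $m\int_0^\infty u^{m-1}S(u)^N\,\dd u$, which I split as $\int_0^\delta+\int_\delta^\infty$. On the short-time interval the integrand is sandwiched between $u^{m-1}(1-e^{-(C\pm\eps)/u})^N$. For the tail, condition~(a) and monotonicity of $S$ give the pointwise bound $S(u)\le (M_0/u)^{1/N_0}$ with $M_0:=\int_0^\infty S^{N_0}\,\dd u$; combined with $S(\delta)<1$ (forced by (b)), this makes the tail integral decay exponentially in $N$ and hence be $o((\ln N)^{-m})$.

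The problem therefore reduces to showing, for each $A>0$,
\[
m\int_0^\delta u^{m-1}(1-e^{-A/u})^N\,\dd u\;\sim\;\Bigl(\frac{A}{\ln N}\Bigr)^m\quad\text{as }N\to\infty,
\]
after which applying this with $A=C\pm\eps$ and sending $\eps\to 0$ yields matching upper and lower bounds. The substitution $v=A/u$ converts the left-hand side into $mA^m\int_{A/\delta}^\infty v^{-(m+1)}(1-e^{-v})^N\,\dd v$, and the truncation $\int_0^{A/\delta}v^{-(m+1)}(1-e^{-v})^N\,\dd v$ is $O((1-e^{-A/\delta})^N)$, exponentially small in $N$. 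Thus the whole problem reduces to the reference asymptotic
\[
J_N:=\int_0^\infty v^{-(m+1)}(1-e^{-v})^N\,\dd v\;\sim\;\frac{1}{m(\ln N)^m}.
\]

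I expect this last step to be the main obstacle. My plan is to rescale $v=(1+r)\ln N$, which gives
\[
(\ln N)^m J_N=\int_{-1}^\infty(1+r)^{-(m+1)}\bigl(1-N^{-(1+r)}\bigr)^N\,\dd r.
\]
For fixed $r>0$ the factor $(1-N^{-(1+r)})^N\to 1$, and dominated convergence on $(0,\infty)$ with the integrable dominator $(1+r)^{-(m+1)}$ immediately delivers $\int_0^\infty(1+r)^{-(m+1)}\,\dd r=1/m$. The delicate part is the contribution of $r\in(-1,0)$, where $(1+r)^{-(m+1)}$ is not integrable near $r=-1$ and there is no uniform-in-$N$ dominator. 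I would handle this by a three-shell split of $(-1,0)$: on the inner shell where $(1+r)\ln N<1$, use $1-N^{-(1+r)}\le (1+r)\ln N$ to obtain $(1-N^{-(1+r)})^N\le((1+r)\ln N)^N$, which cancels the $(1+r)^{-(m+1)}$ singularity and contributes $O((\ln N)^m/N)$; on the middle shell, use $(1-N^{-(1+r)})^N\le\exp(-N^{-r})$, which once $-r$ is bounded below by a fixed $\eta>0$ beats any polynomial in $\ln N$; and on a fixed neighborhood $(-\eta,0)$ apply dominated convergence with the bounded dominator $(1-\eta)^{-(m+1)}$. Assembling these estimates yields $(\ln N)^m J_N\to 1/m$, and unwinding the reductions proves the proposition.
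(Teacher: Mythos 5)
Your argument is correct, and it parts ways with the paper at the key step. The paper sandwiches $S$ between the exit survival probabilities $S_{l_\pm}$ of a one-dimensional unit-diffusivity process on $(-2l_\pm,2l_\pm)$ with $l_\pm=\sqrt{C\pm\eps}$, and then \emph{cites} the known asymptotic $\int_0^\infty (S_l(t^{1/m}))^N\,\dd t\sim(l^2/\ln N)^m$ from earlier work. You instead sandwich $S$ directly between the explicit functions $1-e^{-(C\mp\eps)/t}$ --- for which the relation $t\ln(1-S_{\textup{ref}}(t))=-(C\mp\eps)$ holds exactly rather than asymptotically, which is slightly cleaner --- and then you \emph{prove} the resulting reference asymptotic $\int_0^\infty v^{-(m+1)}(1-e^{-v})^N\,\dd v\sim 1/(m(\ln N)^m)$ from scratch via the rescaling $v=(1+r)\ln N$ and a three-shell split of $(-1,0)$. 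Your tail estimate (power-law pointwise bound from (a) and monotonicity, combined with $S(\delta)<1$ to extract exponential decay in $N$) is the same mechanism the paper uses, merely phrased a bit more loosely. The net effect is that your write-up is self-contained where the paper leans on a citation; the cost is the extra page of work on the shell decomposition, but all three shell estimates (the cancellation of the $(1+r)^{-(m+1)}$ singularity by $((1+r)\ln N)^N$ on the inner shell giving $O((\ln N)^m/N)$, the superpolynomial $\exp(-N^{\eta})$ decay on the middle shell, and dominated convergence with the bounded dominator $(1-\eta)^{-(m+1)}$ on the outer shell) are sound, as is the exponential smallness of the truncation $\int_0^{A/\delta}$.
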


\begin{proof}[Proof of Proposition~\ref{theorem vg}]
For $l>0$, let $\tau(l)$ denote the first time a one-dimensional diffusion process with unit diffusivity starting at the origin escapes the interval $(-2l,2l)$. The survival probability $S_{l}(t)=\P(\tau(l)>t)$ satisfies
\begin{align}\label{gl}
\lim_{t\to0+}t\ln(1-S_{l}(t))=-l^{2}<0.
\end{align}
Let $\eps\in(0,C)$ and define $l_{\pm}:=\sqrt{C\pm\eps}$. By \eqref{gl} and assumption (b) of the proposition, there exists a $\delta>0$ so that
\begin{align*}
S_{l_{-}}(t^{1/m})
\le S(t^{1/m})
\le S_{l_{+}}(t^{1/m})\quad\text{for all }t\in[0,\delta].
\end{align*}
Therefore,
\begin{align}
\begin{split}\label{squeeze}
&\int_{0}^{\delta}(S_{l_{-}}(t^{1/m}))^{N}\,\dd t+\int_{\delta}^{\infty}(S(t^{1/m}))^{N}\,\dd t\\
&\quad\le \int_{0}^{\infty}(S(t^{1/m}))^{N}\,\dd t\\
&\qquad\le\int_{0}^{\delta}(S_{l_{+}}(t^{1/m}))^{N}\,\dd t+\int_{\delta}^{\infty}(S(t^{1/m}))^{N}\,\dd t.
\end{split}
\end{align}

Now, a simple change of variables shows that
\begin{align*}
\int_{0}^{\infty}(S(t^{1/m}))^{N}\,\dd t
=m\int_{0}^{\infty}t^{m-1}(S(t))^{N}\,\dd t\quad\text{if }m\ge1.
\end{align*}
By assumption (a) of the proposition, there exists an $N_{0}\ge1$ so that $\int_{0}^{\infty}(S(t))^{N_{0}}\,\dd t<\infty$. It is then straightforward to check that
\begin{align*}
m\int_{0}^{\infty}t^{m-1}(S(t))^{2^{m-1}N_{0}}\,\dd t<\infty.
\end{align*}
Hence, if $N\ge N_{0}$, we have that since $S$ is nonincreasing,
\begin{align*}
\int_{\delta}^{\infty}(S(t^{1/m}))^{N}\,\dd t
\le K_{0}(S(\delta^{1/m}))^{N},
\end{align*}
where $S(\delta^{1/m})<1$ by assumption (b) of the proposition, and
\begin{align*}
K_{0}
=\int_{\delta}^{\infty}\Big(\frac{S(t^{1/m})}{S(\delta^{1/m})}\Big)^{N_{0}}\,\dd t<\infty.
\end{align*}
Thus,
\begin{align*}
\lim_{N\to\infty}(\ln N)^{m}\int_{\delta}^{\infty}(S(t^{1/m}))^{N}\,\dd t=0.
\end{align*}

Therefore, multiplying \eqref{squeeze} by $(\ln N)^{m}$ and taking $N\to\infty$ yields
\begin{align*}
(C-\eps)^{m}
&\le \liminf_{N\to\infty}\frac{\int_{0}^{\infty}(S(t^{1/m}))^{N}\,\dd t}{(1/\ln N)^{m}}\\
&\quad\le\limsup_{N\to\infty}\frac{\int_{0}^{\infty}(S(t^{1/m}))^{N}\,\dd t}{(1/\ln N)^{m}}
\le(C+\eps)^{m},
\end{align*}
since it is known \cite{weiss1983,lawley2019gumbel} that for any $l>0$,
\begin{align*}
\int_{0}^{\delta}(S_{l}(t^{1/m}))^{N}\,\dd t
&\sim\int_{0}^{\infty}(S_{l}(t^{1/m}))^{N}\,\dd t\\
&\sim\Big(\frac{l^{2}}{\ln N}\Big)^{m}\quad\text{as }N\to\infty.
\end{align*}
Since $\eps\in(0,C)$ was arbitrary, the proof is complete.
\end{proof}

\begin{proof}[Proof of Theorem~\ref{theorem lg}]
Since the mean of any nonnegative random variable $Z\ge0$ is $\int_{0}^{\infty}\P(Z>z)\,\dd z$, we have that
\begin{align*}
\E[(T_{k,N})^{m}]=\int_{0}^{\infty}\P(T_{k,N}>t^{1/m})\,\dd t.
\end{align*}
For $k\in\{1,\dots,N\}$, it is immediate that
\begin{align*}
\P(T_{k,N}>t)
&=\P(T_{1,N}>t)+\P(T_{1,N}<t<T_{2,N})\\
&\quad+\dots+\P(T_{k-1,N}<t<T_{k,N}).
\end{align*}
Furthermore, we have that $\P(T_{1,N}>t)=(S(t))^{N}$ and
\begin{align*}
\P(T_{j,N}<t<T_{j+1,N})
={N\choose j}(1-S(t))^{j}S(t)^{N-j},
\end{align*}
for $j\in\{1,\dots,k-1\}$.

Now, it is straightforward to check that if $j\in\{1,\dots,k-1\}$, then
\begin{align*}
\lim_{N\to\infty}\frac{\int_{0}^{\infty}{N\choose j}(1-S(t^{1/m}))^{j}S(t^{1/m})^{N-j}\,\dd t}{(1/\ln N)^{m}}=0.
\end{align*}
Therefore, since Proposition~\ref{theorem vg} implies that
\begin{align*}
\lim_{N\to\infty}\frac{\int_{0}^{\infty}(S(t^{1/m}))^{N}\,\dd t}{(C/\ln N)^{m}}=1,
\end{align*}
the proof is complete.
\end{proof}

\section{Probability density and survival probability at short time\label{shorttime}}

Varadhan's formula \cite{varadhan1967,norris1997} gives the short time behavior of the probability density of a diffusive searcher in terms of a certain geodesic distance (see \eqref{vpure}, \eqref{vv}, and \eqref{vvnorris}). However, the assumptions of Theorem~\ref{theorem lg} require the short time behavior of the survival probability rather than the probability density. Here, we show how the short time behavior of the probability density yields the short time behavior of the survival probability.

\begin{proof}[Proof that \eqref{vpure} implies \eqref{vpureS}]
Consider the case of free diffusion in $M=\R^{{d}}$ with diffusivity $D>0$ (section~\ref{section pure}). We first bound $1-S(t)$ from below. Notice that if the process is at the target at time $t$, then certainly $\tau\le t$. That is, 
\begin{align}\label{bfb}
\begin{split}
1-S(t)
=\P(\tau<t)
&\ge\P(X(t)\in U_{\text{T}})\\
&=\int_{U_{\text{T}}}\int_{U_{0}}p(x,t|x_{0},0)\,\dd\mu(x_{0})\,\dd x,
\end{split}
\end{align}
where $\mu$ is the probability measure of the initial position of the searcher and $U_{0}$ is its support. It follows from \eqref{vpure} that
\begin{align}\label{laplacetype}
\lim_{t\to0+}t\ln\int_{U_{\text{T}}}\int_{U_{0}}p(x,t|x_{0},0)\,\dd\mu(x_{0})\,\dd x
=-\frac{\deuc^{2}({U_{0}},U_{\text{T}})}{4D},
\end{align}
where $\deuc({U_{0}},U_{\text{T}})$ is defined in \eqref{dset}.

To see why \eqref{laplacetype} holds, define the neighborhood of $U_{0}$,
\begin{align*}
B
:=\{x\in M:\inf_{y\in U_{0}}\deuc(x,y)<2\deuc(U_{0},U_{\text{T}})\}.
\end{align*}
Then, we decompose the target into $U_{\text{T}}=\{U_{\text{T}}\cap B\}\cup\{U_{\text{T}}\backslash B\}$ to obtain
\begin{align}
\begin{split}\label{i1i2}
&\int_{U_{\text{T}}}\int_{U_{0}}p(x,t|x_{0},0)\,\dd\mu(x_{0})\,\dd x\\
&\quad=\int_{U_{\text{T}}\cap B}\int_{U_{0}}p(x,t|x_{0},0)\,\dd\mu(x_{0})\,\dd x\\
&\quad\quad+\int_{U_{\text{T}}\backslash B}\int_{U_{0}}p(x,t|x_{0},0)\,\dd\mu(x_{0})\,\dd x
=:I_{1}+I_{2}.
\end{split}
\end{align}
To handle the first integral, $I_{1}$, let $\eps>0$ and note that \eqref{vpureS} holds uniformly for $x_{0},x$ in compact sets \cite{varadhan1967}. Hence, we may choose $t_{0}>0$ so that
\begin{align*}
e^{-\frac{\deuc^{2}(x_{0},x)+\eps}{4Dt}}
\le p(x,t|x_{0},0)
\le
e^{-\frac{\deuc^{2}(x_{0},x)-\eps}{4Dt}}
\end{align*}
for all $t\in(0,t_{0}]$, $x\in U_{\text{T}}\cap B$, and $x_{0}\in U_{0}$. Now, a straightforward application of the Laplace principle (or Varadhan's lemma \cite{dembo1998}) yields
\begin{align*}
&\lim_{t\to0+}t\ln\int_{U_{\text{T}}\cap B}\int_{U_{0}}e^{-(\deuc^{2}(x_{0},x)\pm\eps)/(4Dt)}\,\dd\mu(x_{0})\,\dd x\\
&\quad=-\frac{\deuc^{2}({U_{0}},U_{\text{T}})\pm\eps}{4D}.
\end{align*}
Since $\eps>0$ is arbitrary, we obtain
\begin{align*}
\lim_{t\to0+}t\ln I_{1}
=-\frac{\deuc^{2}({U_{0}},U_{\text{T}})}{4D}.
\end{align*}
It is straightforward to show that the second integral, $I_{2}$, in \eqref{i1i2} does not contribute to the limit, and so we obtain \eqref{laplacetype}. Therefore, \eqref{bfb} and \eqref{laplacetype} imply
\begin{align}\label{liminfp}
\liminf_{t\to0+}t\ln(1-S(t))\ge-\frac{\deuc^{2}({U_{0}},U_{\text{T}})}{4D}.
\end{align}

To bound $1-S(t)$ from above, let $\eps>0$ and define the set of all points in $M=\R^{d}$ that are more than distance $\eps$ from the target,
\begin{align*}
M_{\eps}
:=\{x\in M:\inf_{y\in U_{\text{T}}}\deuc(x,y)>\eps\}.
\end{align*}
Then decompose $1-S(t)$ into the case that the diffusion is either in $M_{\eps}$ or not in $M_{\eps}$ at time $t$,
\begin{align*}
1-S(t)
=\P(\tau<t,X(t)\in M_{\eps})+\P(\tau<t,X(t)\notin M_{\eps}).
\end{align*}
In the case that the diffusion is outside of $M_{\eps}$, we have
\begin{align*}
\P(\tau<t,X(t)\notin M_{\eps})
&\le\P(X(t)\notin M_{\eps})\\
&=\int_{M\backslash M_{\eps}}\int_{U_{0}}p(x,t|x_{0},0)\,\dd\mu(x_{0})\,\dd x,
\end{align*}
and it again follows from \eqref{vpure} that
\begin{align}\label{zzz}
\begin{split}
&\lim_{t\to0+}t\ln\int_{M\backslash M_{\eps}}\int_{U_{0}}p(x,t|x_{0},0)\,\dd\mu(x_{0})\,\dd x\\
&=-\frac{\deuc^{2}({U_{0}},M\backslash M_{\eps})}{4D}
\le-\frac{(\deuc({U_{0}},U_{\text{T}})-\eps)^{2}}{4D},
\end{split}
\end{align}
since
\begin{align*}
\deuc(U_{0},U_{\text{T}})
&\le \deuc(U_{0},M\backslash M_{\eps})+\deuc(M\backslash M_{\eps},U_{\text{T}})\\
&\le \deuc(U_{0},M\backslash M_{\eps})+\eps,
\end{align*}
by definition of $M_{\eps}$.

It remains to bound $\P(\tau<t,X(t)\in M_{\eps})$. Notice that $\P(\tau<t,X(t)\in M_{\eps})$ is the probability of paths which hit the target before time $t$ (since $\tau<t$) and then move distance $\eps$ away from the target (since $X(t)\in M_{\eps}$). The basic idea is that for small $t$, these paths are less likely than paths that hit the target before time $t$ and stay within an $\eps$ neighborhood of the target. To make this precise, let $s\in(0,t)$ and $z\in\partial U_{\text{T}}$ denote the respective time and position that the diffusion hits the target and use the strong Markov property to obtain
\begin{align}\label{pptt}
\begin{split}
&\P(\tau<t,X(t)\in M_{\eps})\\
&\quad=\int_{0}^{t}\int_{\partial U_{\text{T}}}\int_{M_{\eps}}p(x,t-s|z,0)\,\dd x\,\dd \nu_{s}(z)\,\dd\nu(s),
\end{split}
\end{align}
where $\nu$ denotes the probability measure for the time the diffusion hits the target and $\nu_{s}$ denotes the probability measure for the position that the diffusion hits the target conditioned upon hitting it at time $s$. Now if $s\in(0,t)$, then it follows from the definition of $M_{\eps}$ that
\begin{align*}
&\int_{\partial U_{\text{T}}}\int_{M_{\eps}}p(x,t-s|z,0)\,\dd x\,\dd \nu_{s}(z)\\
&\quad\le\P(\deuc(X(t-s),X(0))>\eps)\\
&\quad\le\P(\deuc(X(t),X(0))>\eps),
\end{align*}
and $\P(\deuc(X(t),X(0))>\eps)\to0$ as $t\to0+$. Similarly, if $s\in(0,t)$, then
\begin{align*}
&\int_{\partial U_{\text{T}}}\int_{M\backslash M_{\eps}}p(x,t-s|z,0)\,\dd x\,\dd \nu_{s}(z)\\
&\quad\ge\P(\deuc(X(t-s),X(0))<\eps)\\
&\quad\ge\P(\deuc(X(t),X(0))<\eps),
\end{align*}
and $\P(\deuc(X(t),X(0))<\eps)\to1$ as $t\to0+$. It follows then from \eqref{pptt} that
\begin{align*}
&\P(\tau<t,X(t)\in M_{\eps})\\
&\quad\le\int_{0}^{t}\int_{\partial U_{\text{T}}}\int_{M\backslash M_{\eps}}p(x,t-s|z,0)\,\dd x\,\dd \nu_{s}(z)\,\dd\nu(s)\\
&\quad=\P(\tau<t,X(t)\notin M_{\eps})
\le\P(X(t)\notin M_{\eps}),
\end{align*}
and we bounded the short time behavior of $\P(X(t)\notin M_{\eps})$ in \eqref{zzz}. We therefore obtain the upper bound
\begin{align*}
\limsup_{t\to0+}t\ln(1-S(t))\le-\frac{(\deuc({U_{0}},U_{\text{T}})-\eps)^{2}}{4D}.
\end{align*}
Using \eqref{liminfp} and the fact that $\eps>0$ is arbitrary completes the proof.
\end{proof}

\begin{proof}[Proof that \eqref{vv} implies \eqref{vvS}]
The proof follows along similar lines as the previous proof. 
\end{proof}

\begin{proof}[Proof that \eqref{vvnorris} implies \eqref{vvnorrisS}]
This can be proven along similar lines as the previous proof, though in this setup, the proof follows directly from Theorem 1.2 in \cite{norris1997}.
\end{proof}

\section{Partially absorbing boundary\label{partial}}

We now prove that the extreme statistics for a partially absorbing target versus a perfectly absorbing target are identical in the case of pure diffusion in a general class of $d$-dimensional spatial domains.

Let $\{X(t)\}_{t\ge0}$ denote the path of a searcher diffusing with diffusivity $D>0$ in a bounded domain $U\subset\R^{{d}}$ with reflecting boundaries and a partially absorbing target $U_{\text{T}}$ with reactivity $\kappa>0$ (assume $U$ is bounded, open, connected, and has a smooth boundary). Suppose the target $U_{\text{T}}\subset U$ is a finite, disjoint union of open balls (we took $U_{\text{T}}$ to be closed in the main text, but it is notationally convenient to take $U_{\text{T}}$ open in this setting). Hence, $X(t)$ satisfies the SDE,
\begin{align}\label{sdepab}
\begin{split}
\dd X
&=\sqrt{2D}\,\dd W+\nu(X)\,\dd L+\nu_{\text{T}}(X)\,\dd L_{\text{T}},
\end{split}
\end{align}
where $W(t)\in\R^{{d}}$ is a standard ${{d}}$-dimensional Brownian motion,
\begin{align}\label{iunf}
\nu:\partial U\to\R^{{d}},\quad\nu_{\text{T}}:\partial U_{\text{T}}\to \R^{{d}},
\end{align}
are the unit normal fields, both pointing into $U\backslash U_{\text{T}}$, and $L(t),L_{\text{T}}(t)$ are the local times of $X(t)$ on the boundaries of $U$ and $U_{\text{T}}$, respectively. The significance of the local time terms in \eqref{sdepab} is that they force $X(t)$ to reflect from the boundary of $U$ and the boundary of $U_{\text{T}}$. For simplicity, assume that the initial distribution of $X$ is a Dirac mass at a single point,
\begin{align*}
X(0)={x_{0}}\in U\backslash\overline{U_{\text{T}}}.
\end{align*}

The searcher is said to be absorbed at the partially absorbing target once its local time on the target surpasses an independent exponential random variable with rate $\kappa$. That is, the absorption time is
\begin{align*}
\tau_{\kappa}
:=\inf\{t>0:L_{\text{T}}(t)>\Sigma_{\kappa}\},
\end{align*}
where $\Sigma_{\kappa}\ge0$ is independent of $X(t)$ and satisfies
\begin{align*}
\P(\Sigma_{\kappa}>t)=e^{-\kappa t}.
\end{align*}
For technical reasons, it is convenient to continue to allow $X(t)$ to diffuse in $U\backslash U_{\text{T}}$ according to \eqref{sdepab} after the ``absorption time'' $\tau_{k}$. Notice that
\begin{align}\label{before}
\tau_{\kappa}\ge\tau:=\inf\{t>0:X(t)\in \overline{U_{\text{T}}}\}.
\end{align}
That is, the searcher is must reach the target before it can be absorbed at the target. Of course, $\tau_{\kappa}=\tau$ if the target is perfectly absorbing, $\kappa=+\infty$.

Define the survival probabilities $S_{\kappa}(t):=\P(\tau_{\kappa}>t)$ and $S(t):=\P(\tau>t)$. Then \eqref{before} implies
\begin{align*}
S_{\kappa}(t)
&=S(t)+\P(\tau_{\kappa}>t,\tau<t)\\
&=S(t)+(1-S(t))\P(\tau_{\kappa}>t\,|\,\tau<t).
\end{align*}
Therefore,
\begin{align}\label{bef}
\begin{split}
&\lim_{t\to0+}t\ln(1-S_{\kappa}(t))
=\lim_{t\to0+}t\ln(1-S(t))\\
&\qquad\qquad\qquad+\lim_{t\to0+}t\ln(1-\P(\tau_{\kappa}>t\,|\,\tau<t)).
\end{split}
\end{align}
To show that the asymptotic behavior of the extreme FPT is unaffected by the partial absorption, Theorem~\ref{theorem lg} implies that it remains to show that
\begin{align}\label{sss}
\lim_{t\to0+}t\ln(1-\P(\tau_{\kappa}>t\,|\,\tau<t))=0.
\end{align}

Using the definition of conditional probability and the strong Markov property gives
\begin{align*}
\P(\tau_{\kappa}>t\,|\,\tau<t)
\le\frac{\int_{0}^{t}\sup_{y\in\partial U_{\text{T}}}\P_{y}(\tau_{\kappa}>t-s)f(s)\,\dd s}
{1-S(t)},
\end{align*}
where $\P_{y}$ denotes the probability measure conditioned on $X(0)=y$ and $f(s)=-S'(s)$ is the density of $\tau$. At this point, assume that there exists a function $\Sb(t)$ satisfying the following three conditions,
\begin{align}
&\Sb(0)
=1,\label{c1}\\
&\sup_{y\in\partial U_{\text{T}}}\P_{y}(\tau_{\kappa}>t)
\le \Sb(t),\;\text{for $t$ sufficiently small},\label{c2}\\
&\Sb'(t)
\le-\lambda<0,\;\text{for $t$ sufficiently small}.\label{c3}
\end{align}
We will return to the question of the existence of such a function $\Sb$ in the subsection below.

Using \eqref{c1}-\eqref{c3} yields that for small $t$,
\begin{align*}
\int_{0}^{t}\sup_{y\in\partial U_{\text{T}}}\P_{y}(\tau_{\kappa}>t-s)f(s)\,\dd s
\le\int_{0}^{t}\Sb(t-s)f(s)\,\dd s\\
\quad\le1-S(t)-\lambda\int_{0}^{t}(1-S(s))\,\dd s,
\end{align*}
after integrating by parts. Therefore,
\begin{align*}
t\ln(1-\P(\tau_{\kappa}>t\,|\,\tau<t))
\ge
t\ln\Big(\frac{\lambda\int_{0}^{t}(1-S(s))\,\dd s}{1-S(t)}\Big)\\
=
t\ln\Big(\lambda\int_{0}^{t}(1-S(s))\,\dd s\Big)
-
t\ln(1-S(t)).
\end{align*}
Notice that
\begin{align}\label{twtw}
\lim_{t\to0+}t\ln(1-S(t))
=-\frac{\deuc^{2}({x_{0}},U_{\text{T}})}{4D},
\end{align}
and $\lim_{t\to0+}t\ln(1-\P(\tau_{\kappa}>t\,|\,\tau<t))\le0$. Hence, in order to verify \eqref{sss}, it remains to show that
\begin{align*}
\lim_{t\to0+}t\ln\Big(\lambda\int_{0}^{t}(1-S(s))\,\dd s\Big)
\ge-\frac{\deuc^{2}({x_{0}},U_{\text{T}})}{4D}.
\end{align*}

It follows from \eqref{twtw} that if $\eps>0$, then
\begin{align*}
1-S(t)\ge \exp\Big(-\frac{\deuc^{2}({x_{0}},U_{\text{T}})+\eps}{4Dt}\Big)
\end{align*}
for all $t$ sufficiently small. Therefore,
\begin{align*}
&\lim_{t\to0+}t\ln\lambda\int_{0}^{t}(1-S(s))\,\dd s\\
&\quad\ge\lim_{t\to0+}t\ln\lambda\int_{0}^{t}\exp\Big(-\frac{\deuc^{2}({x_{0}},U_{\text{T}})+\eps}{4Ds}\Big)\,\dd s\\
&\quad=-\frac{\deuc^{2}({x_{0}},U_{\text{T}})+\eps}{4D},
\end{align*}
Since $\eps>0$ is arbitrary, \eqref{twtw} is verified and we conclude from \eqref{bef} that
\begin{align*}
\lim_{t\to0+}t\ln(1-S_{\kappa}(t))=\lim_{t\to0+}t\ln(1-S(t)).
\end{align*}

\subsection{The existence of $\Sb$ in \eqref{c1}-\eqref{c3}\label{sectionexistence}}

\subsubsection{Symmetric case where $U_{\text{T}}\subset U$ are concentric balls}
We now show that there exists a function $\Sb(t)$ satisfying \eqref{c1}-\eqref{c3}. First, consider the special case of a symmetric problem where $U\subset\R^{{d}}$ is a ball and $U_{\text{T}}\subset U$ is a single ball located at the center of $U$. Specifically, if we denote the open ball of radius $r>0$ centered at $z\in\R^{{d}}$ by
\begin{align*}
B_{r}(z)
:=\{x\in\R^{{d}}:|x-z|<r\},
\end{align*}
then suppose
\begin{align}\label{sym}
U_{\text{T}}=B_{r}(0)\subset U=B_{R}(0),
\end{align}
for $0<r<R$. The survival probability conditioned on an initial location $X(0)=y\in\overline{U\backslash U_{\text{T}}}$,
\begin{align*}
\s(y,t)
=\P_{y}(\tau_{\kappa}>t),
\end{align*}
satisfies the backward Fokker-Planck equation \cite{gardiner2009, pavliotis2014},
\begin{align*}
\tfrac{\partial}{\partial t}\s
&=D\Delta \s,\quad y\in U\backslash\overline{U_{\text{T}}},\\
\tfrac{\partial}{\partial \nu}\s
&=0,\quad y\in\partial U,\\
D\tfrac{\partial}{\partial \nu_{\text{T}}}\s
&=\kappa \s,\quad y\in\partial U_{\text{T}},\\
\s
&=1,\quad t=0.
\end{align*}
where $\tfrac{\partial}{\partial \nu}$ and $\tfrac{\partial}{\partial \nu_{\text{T}}}$ denote derivatives with respect to the inward unit normal fields \eqref{iunf}.

Define the survival probability conditioned on starting on the target,
\begin{align}\label{s0}
S_{0}(t)
:=\s(y,t)\quad\text{for }y\in\partial U_{\text{T}}.
\end{align}
Note that \eqref{s0} is the same for any choice of $y\in\partial U_{\text{T}}$ by symmetry.
It was shown in Section IIIB of \cite{lawley2019imp} that there exists a $\lambda_{0}>0$ so that
\begin{align*}
-S_{0}'(t)\ge\lambda_{0} S(t)\quad\text{for all }t>0.
\end{align*}
 Hence, \eqref{c1}-\eqref{c3} are satisfied with $\Sb(t)=e^{-\lambda_{0}t}$ in the special case that $U_{\text{T}}\subset U$ are concentric balls with respective radii $r<R$.
 
\subsubsection{General case}

We now extend to the case that $U\subset\R^{{d}}$ is a bounded domain and the target is a finite, disjoint union of  balls,
\begin{align*}
U_{\text{T}}=\cup_{k=1}^{K}B_{r_{k}}(z_{k})\subset U.
\end{align*}
Let $y\in\partial U_{\text{T}}$. Without loss of generality, suppose $y\in\partial B_{r_{1}}(z_{1})$.

There exists a $\delta>r_{1}>0$ so that $B_{\delta}(z_{1})\subset U$ and $B_{\delta}(z_{1})\cap U_{\text{T}}=\overline{B_{\delta}(z_{1})}$. Then for each $t>0$, we have that
\begin{align}\label{bas}
\P_{y}(\tau_{\kappa}<t)
=\P_{y}(\tau_{\kappa}<t,\tau_{\delta}>t)+\P_{y}(\tau_{\kappa}<t,\tau_{\delta}<t),
\end{align}
where $\tau_{\delta}$ is the first time the searcher escapes $B_{\delta}(z_{1})$.

Now, for the symmetric problem in \eqref{sym} with $r=r_{1}$ and $R=2\delta$, let $\tau_{\kappa}^{\text{sym}}$ and $\tau_{\delta}^{\text{sym}}$ be the absorption time at $U_{\text{T}}=B_{r_{1}}(0)$ and the hitting time to $\partial B_{\delta}(0)$, respectively. It is immediate that if $|y_{0}|=r_{1}$, then
\begin{align}\label{eqeq}
\P_{y}(\tau_{\kappa}<t,\tau_{\delta}>t)
=\P_{y_{0}}(\tau_{\kappa}^{\text{sym}}<t,\tau_{\delta}^{\text{sym}}>t).
\end{align}
Hence, dividing \eqref{bas} by $\P_{y_{0}}(\tau_{\kappa}^{\text{sym}}<t)$ for $|y_{0}|=r_{1}$ yields
\begin{align}\label{t21}
\begin{split}
\frac{\P_{y}(\tau_{\kappa}<t)}{\P_{y_{0}}(\tau_{\kappa}^{\text{sym}}<t)}
=\frac{\P_{y}(\tau_{\kappa}<t,\tau_{\delta}>t)}{\P_{y_{0}}(\tau_{\kappa}^{\text{sym}}<t)}
+\frac{\P_{y}(\tau_{\kappa}<t,\tau_{\delta}<t)}{\P_{y_{0}}(\tau_{\kappa}^{\text{sym}}<t)}.
\end{split}
\end{align}
Rearranging \eqref{t21} and using \eqref{eqeq} yields
\begin{align*}
\frac{\P_{y}(\tau_{\kappa}<t)}{\P_{y_{0}}(\tau_{\kappa}^{\text{sym}}<t)}
&=\Big(1-\frac{\P_{y_{0}}(\tau_{\kappa}^{\text{sym}}<t,\tau_{\delta}^{\text{sym}}<t)}{\P_{y_{0}}(\tau_{\kappa}^{\text{sym}}<t,\tau_{\delta}^{\text{sym}}>t)}\Big)^{-1}\\
&\quad+\frac{\P_{y}(\tau_{\kappa}<t,\tau_{\delta}<t)}{\P_{y_{0}}(\tau_{\kappa}^{\text{sym}}<t)}.
\end{align*}
We claim that
\begin{align}\label{claim}
\begin{split}
\lim_{t\to0}\frac{\P_{y_{0}}(\tau_{\kappa}^{\text{sym}}<t,\tau_{\delta}^{\text{sym}}<t)}{\P_{y_{0}}(\tau_{\kappa}^{\text{sym}}<t,\tau_{\delta}^{\text{sym}}>t)}
=\lim_{t\to0}\frac{\P_{y}(\tau_{\kappa}<t,\tau_{\delta}<t)}{\P_{y_{0}}(\tau_{\kappa}^{\text{sym}}<t)}
=0,
\end{split}
\end{align}
and thus
\begin{align}\label{thus}
\lim_{t\to0+}\frac{\P_{y}(\tau_{\kappa}<t)}{\P_{y_{0}}(\tau_{\kappa}^{\text{sym}}<t)}=1.
\end{align}

To see why \eqref{claim} holds, note first that
\begin{align*}
&\max\Big\{\frac{\P_{y}(\tau_{\kappa}<t,\tau_{\delta}<t)}{\P_{y_{0}}(\tau_{\kappa}^{\text{sym}}<t)},\frac{\P_{y_{0}}(\tau_{\kappa}^{\text{sym}}<t,\tau_{\delta}^{\text{sym}}<t)}{\P_{y_{0}}(\tau_{\kappa}^{\text{sym}}<t,\tau_{\delta}^{\text{sym}}>t)}\Big\}\\
&\quad\le\frac{\P_{y_{0}}(\tau_{\delta}^{\text{sym}}<t)}{\P_{y_{0}}(\tau_{\kappa}^{\text{sym}}<t,\tau_{\delta}^{\text{sym}}>t)}.
\end{align*}
Now it follows from Varadhan's formula \cite{varadhan1967} that 
\begin{align*}
\P_{y_{0}}(\tau_{\delta}^{\text{sym}}<t)\le e^{-(\delta-r_{1})^{2}/(5Dt)}
\end{align*}
for sufficiently small $t$. Further, we established above that there exists a $\lambda_{0}>0$ so that
\begin{align}\label{other}
\P_{y_{0}}(\tau_{\kappa}^{\text{sym}}<t)\ge1-e^{-\lambda_{0}t}
\end{align}
for sufficiently small $t$. Hence,
\begin{align*}
&\frac{\P_{y_{0}}(\tau_{\delta}^{\text{sym}}<t)}{\P_{y_{0}}(\tau_{\kappa}^{\text{sym}}<t,\tau_{\delta}^{\text{sym}}>t)}\\
&\quad=\frac{\P_{y_{0}}(\tau_{\delta}^{\text{sym}}<t)}{\P_{y_{0}}(\tau_{\kappa}^{\text{sym}}<t)-\P_{y_{0}}(\tau_{\kappa}^{\text{sym}}<t,\tau_{\delta}^{\text{sym}}<t)}\\
&\quad\le\frac{e^{-(\delta-r_{1})^{2}/(5Dt)}}{1-e^{-\lambda_{0}t}-e^{-(\delta-r_{1})^{2}/(5Dt)}}
\end{align*}
for sufficiently small $t$. Taking $t\to0+$ thus verifies \eqref{claim}.

We claim that 
\begin{align}\label{want}
\P_{y}(\tau_{\kappa}>t)\le e^{-(\lambda_{0}/2)t}\quad\text{for sufficiently small }t.
\end{align}
To see this, note that if \eqref{want} is false, then using \eqref{thus}, \eqref{other}, and L'Hospital's rule yields
\begin{align*}
1=\lim_{t\to0+}\frac{\P_{y}(\tau_{\kappa}<t)}{\P_{y_{0}}(\tau_{\kappa}^{\text{sym}}<t)}
\le\lim_{t\to0+}\frac{1-e^{-(\lambda_{0}/2)t}}{1-e^{-\lambda_{0}t}}
=\frac{1}{2},
\end{align*}
which is absurd.  Hence, \eqref{c1}-\eqref{c3} are satisfied with $\Sb(t)=e^{-(\lambda_{0}/2)t}$.


\bibliography{library.bib}

\end{document}